\documentclass[12pt]{article}
\usepackage[margin=1in]{geometry}
\usepackage{amsmath}
\usepackage{amssymb}
\usepackage{amsthm}
\usepackage{ dsfont }
\usepackage{enumitem}
\usepackage{graphicx}
\usepackage{authblk}
\author[ ]{Aaron Berger}

\title{Progressions and Paths in Colorings of $\Z$}
\affil[ ]{Yale University \\ aaron.berger@yale.edu}
\date{}                     %% if you don't need date to appear
\setcounter{Maxaffil}{0}

\newcommand{\bb}{\mathbb}

\newcommand{\Z}{\bb Z}

\newcommand{\N}{\bb N}

\newcommand{\ord}{\mathrm{ord}}

\renewcommand{\mod}{\mathrm{mod}~}

\newtheorem{theorem}{Theorem}[section]

\newtheorem{corollary}[theorem]{Corollary}

\newtheorem{conjecture}[theorem]{Conjecture}
\newtheorem{counterexample}[theorem]{Counterexample}

\begin{document}
	\maketitle
	\begin{abstract}
		A \textit{ladder} is a set $S \subseteq \Z^+$ such that any finite coloring of $\Z$ contains arbitrarily long monochromatic progressions with common difference in $S$. Van der Waerden's theorem famously asserts that $\Z$ itself is a ladder. We also discuss variants of ladders, namely \textit{accessible} and \textit{walkable} sets, which are sets $S$ such that any coloring of $\Z$ contains arbitrarily long (for accessible sets) or infinite (for walkable sets) monochromatic sequences with consecutive differences in $S$. We show that sets with upper density 1 are ladders and walkable. We also show that all directed graphs with infinite chromatic number are accessible, and reduce the bound on the walkability order of sparse sets from 3 to 2, making it tight.
	\end{abstract}
	\section{Introduction}
	%TODO define cartenian product, maybe move more things to introduction.
	In 1927, van der Waerden proved his famous theorem concerning arithmetic progressions in finite colorings of $\Z$, which asserts that any finite coloring of $\Z$ contains arbitrarily long arithmetic progressions. Brown, Graham and Landman study variants of this result by considering other classes of sequences and whether these classes must also appear in finite colorings of $\Z$ \cite{BGL99}. One such class that they study are subsets of the set of arithmetic progressions whose common differences all lie in some set $S \subseteq \Z^+$. When any finite coloring of $\Z$ contains arbitrarily long arithmetic progressions whose common difference is in $S$, $S$ was said to be ``large,'' though such sets are now called \textbf{ladders} \cite{GRS16}. Another related class of sequences are walks, defined over some set $S$, which are sequences of integers whose consecutive differences are in $S$. If any finite coloring of $\Z$ contains arbitrarily long walks over $S$, then $S$ is said to be \textbf{accessible} (see \cite{jungic2005conjecture,landman2007avoiding,landman2010avoiding}). Finally, when any finite coloring of $\Z$ contains walks of infinite length over $S$, $S$ is said to be \textbf{infinitely walkable}. 
	
	Instead of asking whether these properties hold for arbitrary finite colorings of $\Z$, we may ask for a set $S$ whether the above properties hold for any $k$-coloring of $\Z$ for fixed $k$. This leads to the analagous notions of \textbf{$k$-ladders}, \textbf{$k$-accessible} sets, and \textbf{$k$-walkable} sets. 
	
	This paper is largely motivated by the work of Guerreiro, Ruzsa, and Silva in \cite{GRS16}, and answers three conjectures posed in that paper as well as one conjecture from \cite{BGL99}.
	
	\subsection{Organization of Paper}
	Section \ref{Counterexamples Section} presents a selection of known conditions that are necessary or sufficient for a set to be a ladder, and counterexamples to their converses. Some of these counterexamples were either not stated or unknown in the literature, and Counterexample \ref{Countability of Polynomials} in particular answers a conjecture of \cite{GRS16}. Section \ref{Density} examines density results for ladders and outlines further possibilities for research. Section \ref{Section 3} deals with accessible and walkable sets.
	
	\section{Ladders}
	Perhaps the most natural question to ask regarding ladders is how to determine whether a set is or is not a ladder. However, as of yet, there currently exists no easily checked condition that is necessary and sufficient for a set to be a ladder.
	\begin{theorem}[Brown, Graham, Landman \cite{BGL99}]\label{Modular Restrictions}
		A set $S \subseteq \Z^+$ is a ladder if and only if $S \cap n\Z$ is a ladder for every $n$.
	\end{theorem}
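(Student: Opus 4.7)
The plan is to handle the two directions separately, with the reverse direction being immediate and the forward direction being the content of the theorem.

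For the ``if'' direction, I would simply specialize to $n=1$: since $S \cap 1 \cdot \Z = S$, assuming $S \cap n\Z$ is a ladder for every $n$ instantly gives that $S$ itself is a ladder.

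For the ``only if'' direction, the key trick is coloring refinement. Given a finite coloring $c \colon \Z \to [r]$ and a target length $k$, I would define the refined coloring $c' \colon \Z \to [r] \times \Z/n\Z$ by $c'(x) = (c(x), x \bmod n)$. Since this is still a finite coloring and $S$ is assumed to be a ladder, there exists a monochromatic (with respect to $c'$) arithmetic progression $a, a+d, a+2d, \ldots, a+(k-1)d$ of length $k$ with common difference $d \in S$. The fact that all terms share the same second coordinate under $c'$ forces $a \equiv a + d \pmod n$, i.e., $n \mid d$, so $d \in S \cap n\Z$. Meanwhile, all terms sharing the same first coordinate under $c'$ means the progression is monochromatic under the original coloring $c$. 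This produces arbitrarily long monochromatic $c$-progressions with common difference in $S \cap n\Z$, as required.

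There is no real obstacle here; the only thing to be careful about is that the refined coloring $c'$ uses only finitely many colors (namely $rn$ of them), which is what allows the hypothesis that $S$ is a ladder to apply. No quantitative bounds on the van der Waerden-type numbers need to be tracked, since the definition of ladder only demands arbitrarily long progressions, not progressions of any particular length in terms of $r$ and $n$.
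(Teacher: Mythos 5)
Your proposal is correct and uses the same key idea as the paper: refining an arbitrary coloring by the residue class mod $n$ (a Cartesian product coloring) so that monochromatic progressions over $S$ are forced to have common difference in $S \cap n\Z$. The paper phrases this direction contrapositively (a bad coloring for $S \cap n\Z$ yields a bad coloring for $S$), but the construction and the $n=1$ specialization for the converse are identical.
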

	The proof is reasonably simple and informative. 
	\begin{proof}
		If any such intersection were a non-ladder, then there would be some coloring $\chi$ on which all arithmetic progressions over $S \cap n\Z$ had length bounded by some constant. Then consider the coloring $\pi: x \mapsto x~ \mod n$. The Cartesian product $\chi \times \pi$ produces a coloring of $\Z$ whose monochromatic arithmetic sequences over $S$ lie in the subset $S \cap n\Z$, and are therefore of bounded length. We then conclude $S$ is not a ladder.
		
		The other direction is seen immediately by taking $n = 1$.
	\end{proof}
	
	%Perhaps unfortunately, ladders are generally not well-behaved under intersections. This is most easily seen from the examples presented later in the paper of ladders whose complements are also ladders; the intersection of such a set and its complement is empty, and therefore the intersection of two ladders can indeed be a non-ladder.

	%In addition to the modular restrictions, certain sets that grow too quickly are not ladders.
	 
	\subsection{Known Results and New Counterexamples}\label{Counterexamples Section}
	We begin with some known results that give necessary or sufficient conditions for sets to be ladders, and provide counterexamples to their converses, some of which (for example, the counterexample to the converse of Theorem \ref{Polynomial}) were not in previous literature.
	\begin{theorem}[Brown, Graham, Landman \cite{BGL99}]\label{Exponential} If a set $S = \{s_1,s_2,\dots\}$ satisfies $s_{i+1} \geq (1+\epsilon) s_i$ for all $i$ and some $\epsilon > 0$, then $S$ is not a ladder.
	\end{theorem}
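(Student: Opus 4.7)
The plan is to construct an explicit finite coloring $\chi$ of $\Z$ under which every monochromatic arithmetic progression with common difference in $S$ has length bounded by a constant $L_0 = L_0(\epsilon)$. The geometric-growth hypothesis makes $S$ logarithmically sparse: for any $c \in (1, 1+\epsilon]$, each multiplicative band $B_m = \{n \in \Z : c^m \leq |n| < c^{m+1}\}$ contains at most one element of $S$, since $s_{i+1} \geq (1+\epsilon)s_i \geq c \cdot s_i$ forces consecutive elements into distinct bands. This motivates a first attempt at a logarithmic-scale coloring $\chi_{\log}(n) = \lfloor \log_c |n|\rfloor \bmod K$ for some integer $K$ chosen in terms of $\epsilon$.

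For a monochromatic AP $a, a+d, a+2d, \ldots, a+Ld$ with $d \in S$ (WLOG $a, d > 0$), I would set $m_j = \lfloor \log_c(a+jd)\rfloor$ and bound the consecutive increments by $m_{j+1} - m_j \leq \log_c(1 + d/(a+jd)) + 1$. Choosing $K$ strictly larger than this maximum increment forces consecutive band indices, which must agree mod $K$, to actually coincide, so the AP is trapped inside a single band $B_m$. Within that band, the span $Ld$ is at most $c^m(c-1)$, giving $L \leq c^m(c-1)/d$, which handles the case where $d$ is comparable to $c^m$.

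The main obstacle is that this bound $c^m(c-1)/d$ is not uniform when $d$ is much smaller than $c^m$: an AP with small common difference, living in a very high band, can remain inside that band for many steps. To fix this, I would refine $\chi_{\log}$ by taking a product with an auxiliary coloring $\chi_{\text{aux}}$ that, restricted to each band, further distinguishes positions at finer scales adapted to the smaller elements of $S$. Since only logarithmically many $s_i$'s lie below any scale, and those $s_i$'s themselves satisfy the same geometric-growth hypothesis, a self-similar construction is natural: nest a rescaled copy of the same logarithmic coloring recursively inside each band, using the geometric separation at every level.

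The heart of the proof is verifying that this recursive construction can be carried out with only finitely many colors overall while uniformly bounding the length of every monochromatic AP over $S$. The key point is that the recursion does not need to be indexed by the band depth $m$: at each level of nesting, the only APs one still needs to break are those whose common differences are too small to have been disrupted by the earlier levels, and the geometric-growth hypothesis forces the number of effective levels to be a function of $\epsilon$ alone. Combining $\chi_{\log}$ with $\chi_{\text{aux}}$ yields a finite coloring of $\Z$ with the desired uniform length bound, proving $S$ is not a ladder.
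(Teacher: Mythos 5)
A preliminary remark: the paper quotes this theorem from Brown, Graham, and Landman without proof, so the comparison here is with the argument in that reference rather than with anything in the paper itself. Your first stage is sound: the coloring $n\mapsto\lfloor \log_c|n|\rfloor \bmod K$ does trap any monochromatic progression with difference $d\in S$ (after its first term) inside a single band $B_m$, and you correctly identify that this gives no uniform length bound when $d\ll c^m$. The genuine gap is in the recursive refinement, precisely at the step you yourself call ``the heart of the proof.'' The assertion that ``the geometric-growth hypothesis forces the number of effective levels to be a function of $\epsilon$ alone'' is unjustified and, for any nesting scheme of the kind you describe, false. Fix the smallest element $s_1\in S$ and consider progressions with difference $s_1$ sitting deep inside a band $B_m$ with $m$ arbitrarily large. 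Each level of nesting that re-applies a rescaled logarithmic or block coloring shrinks the ambient scale by at most a bounded multiplicative factor (and, for the rescaled logarithmic coloring, possibly not at all if the progression sits near the top of its band), so on the order of $m$ levels are needed before the scale drops below $s_1$ and the progression is actually broken; since $m$ is unbounded and each level costs at least one extra factor in the number of colors, the product of the levels is not a finite coloring. Lacunarity does not rescue this: it controls how many elements of $S$ lie below a given scale, but the difficulty is a single fixed small difference viewed at all scales simultaneously, and that is where the entire content of the theorem lies.

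The known proofs handle exactly this point with a different mechanism. One shows (de Mathan, Pollington; see also Katznelson and Ruzsa--Tuza--Voigt) that for any lacunary sequence there exist a real $\alpha$ and $\delta>0$ with $\|s_i\alpha\|\ge\delta$ for every $i$, where $\|\cdot\|$ denotes distance to the nearest integer; the construction splits $S$ into boundedly many subsequences of large ratio and chooses $\alpha$ by a nested-intervals argument. The coloring $n\mapsto\lfloor \{n\alpha\}/\delta'\rfloor$ for suitable $\delta'<\delta$ then assigns distinct colors to any two integers at distance $s_i$ for every $i$: a single coloring that is consistent across all scales at once, which is exactly the property your level-by-level recursion would need but cannot deliver. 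This in fact proves the much stronger statement that the distance graph of $S$ has finite chromatic number (so $S$ is not even accessible), of which ``not a ladder'' is an immediate consequence. To complete your approach you would have to make the color permutations between consecutive levels of your nesting globally consistent, and doing so is essentially equivalent to constructing such an $\alpha$.
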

	\begin{counterexample}[Converse of Theorem \ref{Exponential}]
		The set of odd integers provides a counterexample to the converse of this theorem. It is a non-ladder by Theorem \ref{Modular Restrictions}.
	\end{counterexample}
	It may also be tempting to hypothesize that sets with exponential growth rates are non-ladders, but this is not the case. Indeed, we will see as a consequence of Theorem \ref{combinatorial cube} that for any $f: \Z\to\Z$, there exists a ladder $S = \{s_1,s_2,\dots\}$ such that $s_i > f(i)$ for all $i$.

	\begin{theorem}[Brown, Graham, Landman \cite{BGL99}]\label{Complement}
		The complement of a non-ladder is a ladder.
	\end{theorem}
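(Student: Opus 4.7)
The plan is to use van der Waerden's theorem together with the product coloring construction from the proof of Theorem~\ref{Modular Restrictions}. Let $T = \Z^+ \setminus S$ and suppose $S$ is a non-ladder. By definition, there exists a finite coloring $\chi$ of $\Z$ and a constant $N$ such that every $\chi$-monochromatic arithmetic progression with common difference in $S$ has length at most $N$.

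To prove $T$ is a ladder, I would start with an arbitrary finite coloring $\chi'$ of $\Z$ and a target length $L$, and seek a $\chi'$-monochromatic AP of length $L$ whose common difference lies in $T$. Form the product coloring $\chi \times \chi'$, which is itself a finite coloring of $\Z$. By van der Waerden's theorem applied to $\chi \times \chi'$, there exists a monochromatic AP of length $\max(L, N+1)$ in this product coloring.

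Such an AP is simultaneously $\chi$-monochromatic and $\chi'$-monochromatic. Since its length exceeds $N$, its common difference cannot lie in $S$ (otherwise the $\chi$-bound would be violated), so the common difference lies in $T$. Viewing the AP as $\chi'$-monochromatic, we have exhibited an arithmetic progression of length at least $L$, monochromatic under $\chi'$, with common difference in $T$. Since $\chi'$ and $L$ were arbitrary, $T$ is a ladder.

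The argument is almost entirely structural, so the only potential obstacle is making sure that "non-ladder" is used with its correct quantifier structure — namely that a single witnessing coloring $\chi$ with a single uniform bound $N$ suffices (which it does, by definition). Once that is in hand, van der Waerden on the product coloring handles everything else; no density or growth estimates are required.
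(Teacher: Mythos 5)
Your proof is correct, and it is the standard argument for this result (the paper itself only cites \cite{BGL99} without reproducing a proof): take the witnessing coloring $\chi$ with its uniform bound $N$, form the product with an arbitrary coloring $\chi'$, and apply van der Waerden to force a long monochromatic AP whose common difference must fall in the complement. This is exactly the same product-coloring technique the paper uses in its proof of Theorem~\ref{Modular Restrictions}, and you handle the one delicate point (that a non-ladder admits a single coloring with a single uniform bound) correctly.
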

	\begin{counterexample}[Converse of Theorem \ref{Complement}]
		As we will see in Theorem \ref{Polynomial}, the set $\{4n^2\}_{n \in \N}$ is a ladder, and its complement contains $\{2n^2\}_{n \in \N}$, which is also a ladder by the same theorem.
	\end{counterexample}
	\begin{theorem}[Brown, Graham, Landman \cite{BGL99}]\label{Polynomial}
		Let $P$ be some polynomial with integer coefficients such that $P(0) = 0$. Then if a set $S$ contains $P(\Z) \cap \Z^+$ then $S$ is a ladder.
	\end{theorem}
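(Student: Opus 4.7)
The strategy is to invoke the polynomial van der Waerden theorem of Bergelson and Leibman, which states: for any polynomials $Q_1, \dots, Q_m \in \Zx$ with $Q_i(0) = 0$ and any finite coloring of $\Z$, there exist $a \in \Z$ and $d \in \Z \setminus \{0\}$ such that $a + Q_1(d), \dots, a + Q_m(d)$ all receive the same color. This is essentially built for the job: an arithmetic progression of length $k$ is a configuration of exactly this shape with $Q_i(x) = ix$, and the theorem generalizes that shape to any polynomial moves.

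Fix a finite coloring $\chi$ of $\Z$ and a target length $k \geq 2$. We may assume $P(\Z) \cap \Z^+$ is infinite, as otherwise the hypothesis on $S$ carries no useful information. After possibly replacing $P(x)$ with $P(-x)$ (which has the same image and still vanishes at $0$), we may further assume $P(x) > 0$ for all $x \geq N_0$, where $N_0$ is some fixed positive integer. Now I apply Bergelson--Leibman to the polynomials
\[
Q_i(x) \;=\; i \cdot P(N_0 x^2), \qquad i = 0, 1, \dots, k-1,
\]
each of which lies in $\Zx$ and vanishes at $0$. The conclusion yields $a \in \Z$ and $d \in \Z \setminus \{0\}$ such that
\[
a,\; a + P(N_0 d^2),\; a + 2\, P(N_0 d^2),\; \dots,\; a + (k-1)\, P(N_0 d^2)
\]
are all the same color under $\chi$. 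Since $d \neq 0$, we have $N_0 d^2 \geq N_0$, so $P(N_0 d^2) > 0$, and hence $P(N_0 d^2) \in P(\Z) \cap \Z^+ \subseteq S$. This is a $k$-term monochromatic arithmetic progression with common difference in $S$; as $k$ and $\chi$ were arbitrary, $S$ is a ladder.

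The principal obstacle is conceptual rather than technical: the argument rests entirely on Bergelson--Leibman, itself a deep result (originally proved via ergodic theory), and there is little to do beyond applying it. The only genuine wrinkle is forcing the common difference $P(d)$ to be positive, since $P(\Z) \cap \Z^+$ need not be closed under negation; I dispatch this by the change of variable $x \mapsto N_0 x^2$ inside $P$, which preserves the vanishing at $0$ (so Bergelson--Leibman still applies) while automatically making the common difference $P(N_0 d^2)$ positive for every nonzero $d$, without any need for syndeticity or other quantitative control on $d$.
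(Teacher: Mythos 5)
Your proof is correct and takes the same route as the paper, which simply derives the theorem from the Bergelson--Leibman polynomial van der Waerden theorem without spelling out the details. Your substitution $x \mapsto N_0 x^2$ to force the common difference $P(N_0 d^2)$ into $P(\Z)\cap\Z^+$ is exactly the right way to handle the sign issue the paper leaves implicit.
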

	This theorem follows from an extension of Van der Waerden's theorem to such polynomials due to Bergelson and Leibman in \cite{bergelson1996polynomial}. Again, the converse is not always true. 
	\begin{counterexample}[Converse of Theorem \ref{Polynomial}]\label{Countability of Polynomials}
		By the countability of polynomials with integer coefficients, we can construct a set $S$ that contains one element of $P(\Z) \cap \Z^+$ and excludes one element of $P(\Z) \cap \Z^+$ for each nonconstant polynomial $P$ with integer coefficients. Then neither this set nor its complement contain all values of $P(\Z) \cap \Z^+$ for any $P \in \Z[x]$, yet by Theorem \ref{Complement} at least one of these sets must be a ladder.
	\end{counterexample}
	
	For the next theorem, we introduce the following notation: A \textbf{combinatorial cube} of dimension $k$ is the set of all subset sums of a multiset of cardinality $k$.
	\begin{theorem}[Brown, Graham, Landman \cite{BGL99}]\label{combinatorial cube}
		If a set $S \subset \Z$ contains combinatorial cubes of arbitrarily large dimension, then $S$ is a ladder.
	\end{theorem}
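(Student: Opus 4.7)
The plan is to derive this from the Hales--Jewett theorem by pulling the coloring back to a finite-dimensional grid. Fix a finite $r$-coloring $\chi$ of $\Z$ and a target progression length $\ell$. Choose $k$ large enough that the Hales--Jewett theorem guarantees a monochromatic combinatorial line in every $r$-coloring of the grid $\{0,1,\dots,\ell-1\}^k$. By hypothesis, $S$ contains a combinatorial cube of dimension $k$, so there is a multiset of positive integers $\{a_1,\dots,a_k\}$ all of whose nonempty subset sums $\sum_{i \in I} a_i$ lie in $S$.

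Next, I pull back $\chi$ along the linear map $\phi:\Z^k\to\Z$ given by $\phi(x_1,\dots,x_k)=\sum_i x_i a_i$, obtaining the induced coloring $\chi'=\chi\circ\phi$ on the grid $\{0,1,\dots,\ell-1\}^k$. Hales--Jewett supplies a monochromatic combinatorial line in $\chi'$: a nonempty wildcard set $J\subseteq\{1,\dots,k\}$ and fixed coordinates $y_i$ for $i\notin J$ such that $\chi'$ is constant on the $\ell$ points having $x_i=y_i$ for $i\notin J$ and $x_j=t$ for every $j\in J$, as $t$ ranges over $\{0,1,\dots,\ell-1\}$.

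Pushing this line forward through $\phi$ yields the integers
\[ \sum_{i\notin J} y_i a_i \;+\; t\sum_{j\in J} a_j, \qquad t=0,1,\dots,\ell-1, \]
which form an arithmetic progression of length $\ell$ with common difference $d=\sum_{j\in J}a_j$. Because $J$ is nonempty, $d$ is a nonempty subset sum of the combinatorial cube and therefore lies in $S$; because $d>0$, the progression consists of $\ell$ genuinely distinct integers; and because $\chi'$ is constant on the line, $\chi$ is constant on the progression. The one conceptual step is the observation that pushing a combinatorial line forward through $\phi$ produces exactly an arithmetic progression whose common difference is a nonempty subset sum of the generators $\{a_1,\dots,a_k\}$, which is precisely the condition needed to land in $S$; after this, the argument is a direct invocation of Hales--Jewett and I do not anticipate any further obstacle.
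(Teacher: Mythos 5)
The paper states this result as a known theorem of Brown, Graham, and Landman and cites \cite{BGL99} without reproducing a proof, so there is nothing internal to compare against; judged on its own, your Hales--Jewett argument is correct and complete. The key points all check out: the pullback $\chi' = \chi\circ\phi$ is an $r$-coloring of $\{0,\dots,\ell-1\}^k$, a monochromatic combinatorial line with wildcard set $J\neq\emptyset$ pushes forward to an honest $\ell$-term arithmetic progression, its common difference $\sum_{j\in J}a_j$ is a nonempty subset sum of the cube's generators and hence lies in $S$, and positivity of the $a_i$ (forced, since each singleton sum $a_i$ must itself lie in $S\subseteq\Z^+$) guarantees the progression is nondegenerate. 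Note also that you correctly avoid the trap of needing the progression's \emph{terms} to lie in $S$ --- only the common difference matters for the ladder property. The only cosmetic caveat is that the paper's definition of a combinatorial cube speaks of ``all subset sums,'' which taken literally includes the empty sum $0$; your restriction to nonempty subset sums is the right reading and is what the argument actually uses.
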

	\begin{counterexample}[Converse of Theorem \ref{combinatorial cube}]
		The set of perfect cubes $\{n^3 : n \in \Z\}$ provides a counterexample to the converse of this statement. Such a set is a ladder by Theorem \ref{Polynomial}. If it contained a combinatorial cube of dimension at least 2, it would contain two elements $a$ and $b$, as well as their sum $a+b$. This, however, would violate Fermat's Last Theorem.
	\end{counterexample}
	
	Theorem \ref{combinatorial cube} also implies that we can construct ladders that are arbitrarily sparse by taking a set of combinatorial cubes that are sufficiently far apart from each other. As such, it seems unlikely that any simple density notion can be a necessary and sufficient condition for a set to be a ladder.
	\subsection{Density 1 Sets are Ladders}\label{Density}
	\begin{theorem}
		Any set $S \subset \Z^+$ with upper density 1 is a ladder.
	\end{theorem}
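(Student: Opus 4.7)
The plan is to reduce to Theorem \ref{combinatorial cube}: it suffices to show that $S$ contains combinatorial cubes of every dimension, whence $S$ is automatically a ladder. I would construct each such cube by a short probabilistic argument inside an initial segment $[1,N]$ where $S$ is nearly complete.

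Fix a target dimension $k$. The upper density 1 hypothesis supplies arbitrarily large $N$ for which at most $\epsilon N$ elements of $[1,N]$ fail to lie in $S$, where $\epsilon = \epsilon(k) > 0$ is a parameter to be pinned down in a moment. I would select $a_1, \ldots, a_k$ independently and uniformly at random from $\{1, \ldots, \lfloor N/k \rfloor\}$, so that every nonempty subset sum $\sigma_I = \sum_{i \in I} a_i$ lies in $[1, N]$. Conditioning on all summands of $\sigma_I$ except one gives the anticoncentration estimate $\Pr(\sigma_I = v) \le 1/\lfloor N/k \rfloor \le 2k/N$ for every fixed target $v$, and therefore $\Pr(\sigma_I \notin S) \le 2\epsilon k$.

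A union bound over the $2^k - 1$ nonempty subsets $I \subseteq [k]$ gives $\Pr(\text{some } \sigma_I \notin S) \le 2^{k+1}\epsilon k$. Choosing $\epsilon < 2^{-k-1}/k$ drops this strictly below $1$, so some deterministic choice of $a_1, \ldots, a_k$ has all $2^k - 1$ subset sums in $S$, i.e.\ forms a combinatorial cube of dimension $k$ contained in $S$. Since $k$ was arbitrary, Theorem \ref{combinatorial cube} concludes that $S$ is a ladder.

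The delicate point is the balance between the $2^k$ growth in the union bound and the $\epsilon k$ per-event failure probability: the argument closes only because $\epsilon$ can be chosen arbitrarily small, which is exactly what upper density 1 (as opposed to merely positive upper density) affords. The anticoncentration step, the only remaining arithmetic input, is entirely routine, and no appeal to deeper machinery like Szemer\'edi's theorem is needed.
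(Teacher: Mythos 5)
Your argument is correct: the reduction to Theorem \ref{combinatorial cube} is exactly the reduction the paper uses, but the way you locate the cube inside $S$ is genuinely different. The paper finds a \emph{homogeneous} cube $\{x,2x,\dots,nx\}$ (all $n$ generators equal to $x$) by a deterministic double-counting argument: if every progression $\{x,\dots,nx\}$ with $x \le N/n$ met the complement of $S$, then since each integer lies in at most $n$ such progressions the complement would have size at least $N/n^2$, contradicting density $1-1/n^2$. Your first-moment argument with independent uniform generators and the anticoncentration bound $\Pr(\sigma_I = v) \le 1/\lfloor N/k\rfloor$ is a perfectly valid substitute, and it produces a generic $k$-generator cube rather than a degenerate one. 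The trade-off is quantitative: the paper's counting only needs density $1 - 1/n^2$ on some initial segment to find a cube of dimension $n$, whereas your union bound over $2^k-1$ subset sums forces $\epsilon$ exponentially small in $k$. Since upper density $1$ hands you every $\epsilon > 0$ for free along a subsequence of $N$'s, this costs nothing here, but it means the paper's version degrades more gracefully if one ever wants to weaken the density hypothesis. One small point worth making explicit in your write-up: the paper's definition of a combinatorial cube nominally says ``all subset sums,'' but (as the paper's own examples show) the empty sum is excluded, so your restriction to nonempty $I$ is the right reading and the cube you build does lie in $S \subset \Z^+$.
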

	\begin{proof}
		We show that any set with upper density 1 contains arbitrarily long sequences of the form $\{x,2x,3x,\dots\}$. Each of these sequences is a combinatorial cube, therefore this would imply that such a set is a ladder (Theorem \ref{combinatorial cube}). Recall that a set has upper density 1 in $\Z^+$ if
		$$
		\limsup_{n \to \infty} \frac{|S\cap [1,n]|}{n} = 1.
		$$

	Then for any $n$, we can find some $N$ with $|S \cap [1,N]| > N(1-\frac 1 {n^2})$. Now consider the sequences $\{x,2x,\dots,nx\}$ for $x \in [1,\frac Nn]$. Assume for the sake of contradiction that $S$ contains at most $n-1$ elements in each of these sequences. Then the complement of $S$ contains at least one element from each sequence. We note that each number $t$ can appear in at most $n$ sequences. Otherwise, we would have $t$ appearing in the $k^\text{th}$ spot in two distinct sequences $\{x,2x,\dots,nx\}$ and $\{y,2y,\dots,ny\}$ for some $k$ by pigeonhole principle. This would imply $t = kx = ky$, so $x = y$ and the two sequences are precisely the same, which is a contradiction. We then conclude that the complement of $S$ must contain at least one element for every $n$ sequences constructed above, so its size is at least $N/n^2$, contradicting the density assumption above. Thus $S$ contains $\{x,2x,\dots,nx\}$ for some $x$ in this interval. Since $n$ was chosen arbitrarily, the result follows.
	\end{proof}
	%TODO divisibility
	\begin{corollary}
		Let $S$ be a set of the form $P(\Z) \cap \Z^+$ for some $P \in \Z[x]$ of degree at least 2 such that $P(0) = 0$. Then $S$ and its complement are both ladders.
	\end{corollary}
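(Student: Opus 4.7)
The corollary is immediate from the two main results already available. For the first half, $S = P(\Z) \cap \Z^+$ contains itself, so Theorem \ref{Polynomial} (the polynomial van der Waerden theorem of Bergelson--Leibman) directly gives that $S$ is a ladder.

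For the second half, the plan is to show that the complement $\Z^+ \setminus S$ has upper density $1$, and then apply the previous theorem (density $1$ sets are ladders). Let $d = \deg P \geq 2$. Since $|P(n)| \sim c|n|^d$ for large $|n|$, the number of $n \in \Z$ with $P(n) \in [1,N]$ is $O(N^{1/d})$, so $|S \cap [1,N]| = O(N^{1/d}) \leq O(\sqrt N)$. Hence
\[
\limsup_{N \to \infty} \frac{|(\Z^+ \setminus S) \cap [1,N]|}{N} \geq \limsup_{N \to \infty} \frac{N - O(\sqrt N)}{N} = 1,
\]
so $\Z^+ \setminus S$ has upper density $1$, and by the preceding theorem it is a ladder.

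There is no real obstacle here; the only thing to check carefully is the routine counting bound $|S \cap [1,N]| = O(N^{1/d})$, which follows from the fact that $P$ takes each value at most $d$ times and that $|P(n)| \leq N$ forces $|n| \leq C N^{1/d}$ for some constant $C$ depending on $P$, once $N$ is large. The hypothesis $\deg P \geq 2$ is used precisely to ensure $N^{1/d} = o(N)$, which would fail for linear $P$ (where indeed the complement can fail to be a ladder, e.g., $P(x)=x$ gives $S = \Z^+$ with empty complement).
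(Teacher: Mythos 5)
Your proof is correct and is exactly the argument the paper intends: $S$ is a ladder by Theorem \ref{Polynomial}, and since $|S \cap [1,N]| = O(N^{1/d}) = o(N)$ for $d \geq 2$, the complement has upper density $1$ and is a ladder by the preceding density theorem. No issues.
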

	 This negatively resolves a question in \cite{BGL99} as to whether sets of the form $S \cap P(\Z)$ are ladders for all ladders $S$ and any non-linear $P$ as above. By taking $S$ to be the complement of $P(\Z)$ for any choice of $P$, we see that this intersection is empty and thus not a ladder.

	To conclude the section, we present a conjectural density condition for a set to be a ladder. From Theorem \ref{Modular Restrictions} we see that  $\Z \setminus n\Z$ is a non-ladder, which means that we can construct ladders with density $1-\epsilon$. The following conjecture asserts that this ``modular restriction'' is the only such obstacle to a density condition. Specifically,
	\begin{conjecture}
		Any set $S \subset \Z$ with positive relative upper density in each subgroup $n\Z$ is a ladder, where the {relative upper density} of $S$ in a subgroup $n\Z$ is defined as 
		$$
		\limsup_{k \to \infty} \frac{\left|S \cap \{n,2n,\dots,kn\}\right|}{k}.
		$$
	\end{conjecture}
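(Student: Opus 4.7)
The plan is first to apply Theorem~\ref{Modular Restrictions}: it suffices to show $S \cap n\Z$ is a ladder for every $n$, and after dividing by $n$ this becomes the statement that $T_n := \{s/n : s \in S \cap n\Z\}$ is a ladder. The hypothesis on $S$ transfers cleanly to each $T_n$, since the relative upper density of $T_n$ in $m\Z$ equals the relative upper density of $S$ in $nm\Z$, so the conjecture reduces to the case $n=1$: every $T \subset \Z^+$ with positive relative upper density in every subgroup $m\Z$ is a ladder.

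For this reduced statement I would invoke Furstenberg's correspondence principle. Fix a finite coloring $\chi$ of $\Z$ with $r$ colors and a target length $k$; some color class $C$ has upper density at least $1/r$, and by correspondence $C$ matches a measurable set $A$ of positive measure in some measure-preserving $\Z$-system $(X, \mu, R)$. A monochromatic length-$k$ arithmetic progression in $C$ with common difference $d$ corresponds exactly to
$$
\mu\left(A \cap R^{-d}A \cap \cdots \cap R^{-(k-1)d}A\right) > 0,
$$
so the goal is to show that the multiple recurrence set $R_k(A) := \{d > 0 : \mu(\bigcap_{i=0}^{k-1} R^{-id}A) > 0\}$ meets $T$. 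By Furstenberg's theorem $R_k(A)$ is nonempty, by Bergelson's refinement it is syndetic, and by results in the Host--Kra direction it in fact contains, up to a sparse modification, a Bohr set of the form $\{n : \|n \vec\alpha\| < \epsilon\}$ for some $\vec\alpha$ and $\epsilon > 0$.

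The central obstacle is showing $T \cap R_k(A) \neq \emptyset$. When $\vec\alpha$ is rational the Bohr set contains a coset $m\Z + s$, and intersecting this with $T$ requires more than positive density of $T$ in the subgroup $m\Z$ — it requires positive density in the specific coset $m\Z + s$, which the stated hypothesis does not literally provide, so an additional argument is needed (or else the coset has to be arranged to be trivial, perhaps by enlarging $m$). When $\vec\alpha$ is irrational the Bohr set is equidistributed but has only density on the order of $\epsilon$, and here one would need an equidistribution or Fourier argument along a suitable dilate to force intersection with the upper-density-positive $T$. Carrying both cases through uniformly over all colorings, and in a way that handles all $k$, is where I expect the real difficulty. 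A fallback approach would be to try to extract combinatorial cubes of arbitrary dimension from $T$ directly and invoke Theorem~\ref{combinatorial cube}, but density alone is known to be insufficient for Hindman-style structure, so this route too would require genuinely exploiting the per-subgroup density hypothesis.
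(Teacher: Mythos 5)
The statement you are addressing is presented in the paper as an open conjecture --- the paper gives no proof and explicitly solicits partial results --- so the relevant question is whether your proposal actually closes it, and it does not: you yourself flag the step $T \cap R_k(A) \neq \emptyset$ as unresolved, and it remains so. Two concrete points. First, your opening reduction is correct but circular \emph{as a reduction}: after dividing $S \cap n\Z$ by $n$ you obtain a set $T_n$ satisfying exactly the same hypothesis as $S$ (positive relative upper density in every subgroup), so Theorem \ref{Modular Restrictions} only confirms that the hypothesis class is closed under the one operation that is obviously necessary; it does not simplify the problem. Second, the ergodic-theoretic input you lean on is not available in the form you state. For $k$-term multiple recurrence with $k \geq 3$ the correct structured description of large return times involves nilsequences (Bergelson--Host--Kra), not Bohr sets, and even for single recurrence the assertion that a set of return times contains a Bohr set up to a sparse modification is essentially Katznelson's open problem; the F\o lner-type results give ``Bohr set minus a density-zero set'' only for difference sets. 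So the case analysis on $\vec\alpha$ rests on structure that has not been established, and even granting it, you correctly observe that the rational case demands positive density of $T$ in a specific coset of $m\Z$, which the stated hypothesis does not supply. Identifying the obstacle as ``intersect $T$ with a syndetic but structured set of return times'' is a reasonable diagnosis of why the conjecture is hard, but the proposal is a program with acknowledged gaps, not a proof, and the paper claims none.
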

	Any partial results or weaker variants would still be quite interesting.

	\section{Accessible and Walkable sets}\label{Section 3}
	We now define accessible and walkable sets, which are two commonly-studied variants of ladders \cite{GRS16,jungic2005conjecture, landman2007avoiding, landman2010avoiding}. 
		For a set $S \subset \Z^+$, define its \textbf{distance graph} $G(S) = (V,E)$ with $V = \Z$ and $E = \{(v_1,v_2) \in V \times V \mid |v_1-v_2| \in S\}$.
		A \textbf{walk} over a set $S$ is a sequence $\{a_1,a_2,\dots\}$, of either finite or infinite length, such that for all $i$, $a_{i+1}-a_i \in S$. Equivalently, it is the set of vertices of some path in $G(S)$.
		We say a set $S \subseteq Z^+$ is \textbf{accessible} if any finite coloring of $\Z$ admits arbitrarily long monochromatic walks over $S$.
	We say a set $S \subset \Z^+$ is \textbf{$k$-walkable} if for any $k$-coloring of $\Z$, there are infinitely long monochromatic walks over $S$. A set that is $k$-walkable for all $k$ is called infinitely walkable. (Note the slight distinction between accessible and walkable sets.)
	
	We briefly note the connections between these types of sets and ladders. It is clear that all ladders are accessible, but Jungi\'c provides an example of an accessible sequence that is not a ladder \cite{jungic2005conjecture}. It is not immediately obvious whether a ladder should be infinitely walkable, or vice-versa, however the authors of \cite{GRS16} provide examples of ladders that are not infinitely walkable and of infinitely walkable sets that are not ladders. 
	
	The following result parallels our earlier density result regarding ladders.
	\begin{theorem}\label{Walkable Density 1}
		Any set $S \subset \Z^+$ with upper density 1 is infinitely walkable.
	\end{theorem}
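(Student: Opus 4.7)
The plan is to build the infinite walk greedily, exploiting the upper-density-$1$ hypothesis via a pigeonhole argument. Fix a $k$-coloring $\chi$ of $\Z$. Since $|S \cap [1,N]|/N$ has limsup $1$, for any $\epsilon > 0$ the set $\mathcal{N}_\epsilon := \{N : |S \cap [1, N]|/N \geq 1 - \epsilon\}$ is infinite. At each $N \in \mathcal{N}_\epsilon$ some color class has density at least $1/k$ in $[1, N]$, so by pigeonhole over the $k$ colors I can fix a single color $c^*$ and an infinite sub-sequence $\mathcal{N}^* \subseteq \mathcal{N}_\epsilon$ along which $|C_{c^*} \cap [1,N]|/N \geq 1/k$.

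The key extension lemma is that $(v+S) \cap C_{c^*}$ is non-empty for every $v \in C_{c^*}$. To prove it, fix $v$ and pick $N \in \mathcal{N}^*$ much larger than $v$; inclusion-exclusion on the window $[v+1, v+N]$ gives
\[
|(v+S) \cap C_{c^*} \cap [v+1, v+N]| \;\geq\; |S \cap [1, N]| + |C_{c^*} \cap [v+1, v+N]| - N.
\]
The first term is at least $(1-\epsilon)N$ by the choice of $\mathcal{N}_\epsilon$, and since $[v+1, N] \subseteq [v+1, v+N]$ we have $|C_{c^*} \cap [v+1, v+N]| \geq |C_{c^*} \cap [1, N]| - |C_{c^*} \cap [1,v]| \geq N/k - v$. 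Summing yields a lower bound of $(1/k - \epsilon) N - v$, which is positive for $N$ sufficiently large once $\epsilon < 1/k$.

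Given the lemma, a greedy construction finishes the proof: choose any $c_1 \in C_{c^*}$, and inductively pick $c_{i+1} \in (c_i + S) \cap C_{c^*}$, which exists by the lemma applied with $v = c_i$. The resulting strictly increasing sequence $c_1 < c_2 < \cdots$ has all consecutive differences in $S$ and is monochromatic of color $c^*$, producing the desired infinite walk.

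I don't anticipate a serious obstacle; the proof reduces to one layer of pigeonhole combined with a single inclusion-exclusion estimate. The only care required is in verifying that the additive $v$-error incurred by shifting from $[1, N]$ to $[v+1, v+N]$ is absorbed by taking $N$ large enough relative to $v$ at each step of the greedy walk.
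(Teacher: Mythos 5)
Your proof is correct, and it takes a genuinely different route from the paper. The paper's argument is coloring-free: it iteratively builds an infinite set $H$ with $H-H\subseteq S$ (at each stage, the translates $tn-H_k$ are disjoint, so if none lay in $S$ the complement of $S$ would be too dense), and then invokes the standard fact from \cite{GRS16} that containing such a difference set forces infinite walkability, since any finite coloring of the infinite set $H$ has an infinite monochromatic subset whose consecutive differences lie in $H-H\subseteq S$. You instead fix the $k$-coloring first, pigeonhole to a color class $C_{c^*}$ of density at least $1/k$ along an infinite sequence of scales, and use the estimate $|A\cap B|\ge |A|+|B|-N$ in the window $[v+1,v+N]$ to show that $(v+S)\cap C_{c^*}\neq\emptyset$ for every $v$, which drives the greedy walk; the quantifier order is sound because $\epsilon<1/k$ is fixed once at the outset and $N\in\mathcal{N}^*$ is chosen afresh, large relative to the current $v$, at each step. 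The paper's approach buys the stronger structural conclusion (an infinite clique in the distance graph $G(S)$, handling all $k$ simultaneously) at the cost of citing an external lemma; yours is self-contained and, as a bonus, localizes the density requirement: it shows that upper density strictly greater than $1-1/k$ already suffices for $k$-walkability, a refinement not visible in the paper's proof. The only cosmetic issue is the clash between $c_i$ for walk elements and $c^*$ for a color.
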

	\begin{proof}
		We will construct an infinite set $H$ such that $H-H \subseteq S$. This will immediately imply $S$ is infinitely walkable (see \cite{GRS16}). We proceed inductively, by constructing a sequence of sets $H_1 \subset H_2 \subset H_3 \subset \cdots$ such that $H_i - H_i \subset S$ for all $i$.
		
		To begin, take $H_1 = \{h_1\}$ for some $h_1 \in S$. Such an element must exist by the density of $S$. Now, say we have $H_k = \{h_1,\dots,h_k\}$ such that $H_k-H_k \subseteq S$. Then fix $n > h_k$ and consider the sets $n-H_k, 2n-H_k,3n-H_k,\dots$. Each of these sets $tn-H_k$ lies in the interval $((t-1)n,tn)$ and so they are mutually disjoint. Assume for the sake of contradiction that none of these sets is contained entirely in $S$. Then in each interval $((t-1)n,tn)$, $S$ is missing at least one element, and so for all $N > 2n(n-1)$ the density of $S$ on an interval $[1,N]$ is bounded by $$\frac {n-1}N \left\lceil{\frac Nn}\right\rceil < \frac {n-1}{n}+\frac{n-1}{N} < \frac{2n-1}{2n},$$ contradicting the upper density assumption on $S$.
		
		Then, by contradiction, we have some $tn$ such that all of its differences with elements of $H$ are in $S$. Then we have $H \cup \{tn\} - H \cup \{tn\} \subseteq S$, so we let $H_{k+1} = H_k \cup \{tn\}$. 
		
		Finally, take $H = \bigcup_{i = 1}^\infty H_i$. This forms an infinite subset of $S$. Any two elements lie in some $H_k$ for sufficiently large $k$, and so their difference lies in $S$. Thus, we conclude that $H-H \subset S$, completing the proof.
	\end{proof}
	\subsection{Walkability Order}
	For sets that are not infinitely walkable we define the order of a set as follows. $$\ord(S) := \sup\{k \mid S \text{ is $k$-walkable}\}.$$
	We prove the following theorem concerning the order of sets whose elements grow quickly, improving on a similar result of Guerreiro, Ruzsa, and Silva in \cite{GRS16} by nearly a factor of two.
	\begin{theorem}\label{Walkability Order}
		Say $S \subset \Z^+$ and $S = \{s_1,s_2,\dots\}$ such that $\liminf \{s_{i+k}-s_i\}$ is infinite. Then $\ord(S) \le k+1$.
	\end{theorem}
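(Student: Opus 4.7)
The plan is to establish $\ord(S) \le k+1$ by exhibiting a $(k+2)$-coloring of $\Z$ under which no infinite monochromatic walk over $S$ exists. The hypothesis $\liminf(s_{i+k} - s_i) = \infty$ provides a uniform sparsity on the tail of $S$, and this sparsity is what fuels the construction.

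\textbf{Setup via sparsity.} The first step is to convert the hypothesis into a quantitative form: since $s_{i+k} - s_i \le L$ holds for only finitely many $i$, there exists a threshold $A(L)$ such that every interval of length $L$ starting at or beyond $A(L)$ contains at most $k$ elements of $S$. Using this, I would select a rapidly growing sequence $0 < N_1 < N_2 < \cdots$ satisfying $A(N_{j+1}) \le N_j$ for every $j$, so that each block $B_j := [N_j, N_{j+1})$ meets $S$ in at most $k$ points and, more generally, every interval of length $N_{j+1}$ lying inside $[N_j, \infty)$ contains at most $k$ elements of $S$.

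\textbf{Coloring and main argument.} I would then attempt the block coloring $\chi(n) = j \bmod (k+2)$ for $n \in B_j$ (assigning an arbitrary color to the finite initial segment). Suppose for contradiction there is an infinite monochromatic walk $a_1 < a_2 < \cdots$ in color $c$. Since each block is finite and the walk is strictly increasing, infinitely many blocks must be visited, all with indices in $c + (k+2)\Z$, so distinct consecutive block indices along the walk differ by at least $k+2$. Each such long jump from $B_j$ to $B_{j'}$ with $j' \ge j+k+2$ forces a specific element of $S$ into a tightly controlled interval, and the aim is to combine several consecutive jumps---possibly together with $S$-elements used within a single block by the walk---to locate $k+1$ distinct elements of $S$ inside one window of length at most $N_{j+1}$, contradicting the sparsity bound from the setup.

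\textbf{Main obstacle.} The technical heart is this packing argument, and I expect it to be where the proof is won or lost. The naive period-$(k+2)$ block coloring gives each block transition only one constrained $S$-element, which is not by itself enough to overcommit the sparsity bound, and within a block the walk can use a single small $S$-element repeatedly without creating new witnesses. I would expect the proof to require a refined coloring---for instance, subdividing each block into $k+1$ equally spaced sub-blocks each carrying a different color pattern, so that any monochromatic walk is forced to perform many sub-block transitions in a short range---thereby roughly doubling the number of forced $S$-elements in a bounded window and producing the contradiction. This bi-level blocking is plausibly the trick responsible for the factor-of-two improvement over the prior bound of Guerreiro, Ruzsa and Silva.
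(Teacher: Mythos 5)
There is a genuine gap: you never actually produce a coloring that works, and you say so yourself in your final paragraph. The periodic block coloring $\chi(n) = j \bmod (k+2)$ for $n \in B_j$ fails for exactly the reason you identify --- a monochromatic walk only needs one element of $S$ per block-to-block jump, and nothing in the hypothesis (which permits up to $k$ elements of $S$ in each long interval, not zero) prevents an infinite sequence of such jumps. Your proposed fix, a ``bi-level blocking'' with sub-blocks, is left entirely unspecified, and the contradiction you hope for (packing $k+1$ elements of $S$ into one short window) is never derived. As written, the proposal is a plan with the hard part missing.

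The idea that actually closes the gap is different in character from a packing argument. Keep your interval decomposition, but do not color each block with a single color. Instead, in block $I_t$ allow the $k+1$ colors of $[k+2]$ \emph{other than} $t \bmod (k+2)$, and choose the color of each element greedily. The sparsity hypothesis, applied with interval lengths chosen so that $|I_t| = s_N$ where $s_{n+k} - s_n$ exceeds $\sum_{i<t}|I_i|$ for all $n > N$, guarantees that each $x \in I_t$ has at most $k$ neighbors in $I_1 \cup \cdots \cup I_{t-2}$: any such neighbor differs from $x$ by some $s_n$ with $s_n > |I_{t-1}|$, and then $x - s_{n+k}$ already falls below the start of $I_1$, so at most $k$ of the $s_i$ can reach back that far. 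Having $k+1$ colors available and only $k$ constraints, the greedy choice eliminates every monochromatic edge spanning two or more blocks. Consequently any monochromatic walk lives in a set of \emph{consecutive} intervals; if it touched $k+2$ of them, one of those intervals would forbid the walk's color entirely. So the walk sits in at most $k+1$ finite intervals and is finite. The factor-of-two improvement over Guerreiro--Ruzsa--Silva comes from this ``forbid one color per block, greedily dodge the rest'' device, not from a refined sub-block partition.
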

	For comparison, see the proof of Theorem 9 in \cite{GRS16}.
	\begin{proof}
		We construct a $(k+2)$-coloring of $\Z^+$ such that every monochromatic walk over $S$ is of finite length. First, partition $\Z$ into intervals in the following manner:
		\begin{enumerate}
			\item Set $I_1 = \{1\}$.
			\item For all $t > 1$, $I_t$ begins immediately after $I_{t-1}$ ends, and $|I_t| = s_N$, where $N$ is chosen such that for all $n > N$, $s_{n+k}-s_{n} > \sum_{i = 1}^{t-1} |I_i|$. Such an $N$ must exist by the assumptions on $S$.
		\end{enumerate}
		With this partition, we have the following fact: An element $x$ in interval $I_t$ is adjacent to at most $k$ elements in the set $I_1 \cup \dots \cup I_{t-2}$. This follows from the fact that for $x$ to be adjacent to an element $y$ of this set, these two elements must differ by some element greater than $s_n > |I_{t-1}| = S_N$ as above. Then $x-s_{n+k} < x-s_n-\sum_{i = 1}^{t-2} |I_i| < 0$. And so there are at most $k$ values of $s_i$ such that $x-s_i \in I_1 \cup \dots \cup I_{t-2}$.
		
		We now color the integers using the elements of $[k+2]$. In each interval $I_t$, we will restrict ourselves to using the $k+1$ elements of $[k+2]$ not equivalent to $t ~\mod k+2$. The coloring proceeds as follows: each element $t \in I_t$ is adjacent to at most $k$ elements in $I_1 \cup \dots \cup I_{t-2}$ from above. We have $k+1$ choices for colors of elements in this interval, so choose a color for $t$ that is not equal to any of the colors of these neighbors, if they exist. Now, in this coloring, no element is adjacent to any element that is greater than one interval away. Then any monochromatic walk over $S$ contains elements in a consecutive set of intervals. If this consecutive set contained $k+2$ intervals, it would have to contain an interval with no elements of its color, which is impossible. Thus each monochromatic walk over $S$ in this coloring is contained in a set of at most $k+1$ finite intervals, and is therefore finite, which completes the proof.
	\end{proof}
	As a specific useful case of this theorem, we present the following corollary.
	\begin{corollary}\label{Walkability Order 2}
		 Let $S = \{s_i\}$ with $\liminf\{s_{i+1}-s_i\} = \infty$. Then $\ord(S) \leq 2$.
	\end{corollary}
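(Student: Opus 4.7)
The plan is to apply Theorem \ref{Walkability Order} directly with $k = 1$. The hypothesis of that theorem requires $\liminf\{s_{i+k} - s_i\} = \infty$, which in the case $k = 1$ is exactly the hypothesis given in the corollary. The conclusion then yields $\ord(S) \le k+1 = 2$, as desired.

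Since this is a one-line deduction from the previous theorem, there is no real obstacle. The only thing worth remarking is that the bound is tight: there exist sets satisfying $\liminf\{s_{i+1} - s_i\} = \infty$ that are $2$-walkable, so the inequality $\ord(S) \le 2$ cannot be improved to $\ord(S) \le 1$ in general (indeed, this tightness is what the paper's abstract advertises as closing the gap between $3$ and $2$). If a more self-contained exposition were desired, I would simply reproduce the construction from the proof of Theorem \ref{Walkability Order} specialized to $k=1$: partition $\Z$ into intervals $I_t$ whose lengths grow fast enough that no element of $I_t$ is $S$-adjacent to more than one element of $I_1 \cup \dots \cup I_{t-2}$, and then $3$-color by avoiding color $t \pmod 3$ on $I_t$ and choosing, for each new element, one of the two remaining colors distinct from its at-most-one earlier neighbor. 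This produces a $3$-coloring with no infinite monochromatic walk, witnessing $\ord(S) \le 2$.
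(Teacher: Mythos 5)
Your proposal is correct and matches the paper's intent exactly: the corollary is stated as ``a specific useful case'' of Theorem \ref{Walkability Order}, obtained by setting $k=1$, which is precisely your one-line deduction. No further comment is needed.
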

	This answers a problem of \cite{GRS16} that asks for the order of the set of squares, which the authors of the conjecture showed to be at least 2. Corollary \ref{Walkability Order 2} shows that the order of this set must then equal 2. Moreover, this example shows that given only the above assumptions on $S$, the bound given by Theorem \ref{Walkability Order} on the walkability order of $S$ is tight.
	%ORDER of the set of cubes?
	
	\subsection{Accessibility of General Directed Graphs}
	We can also study accessibility and walkability through the distance graph $G(S)$. Guerreiro, Ruzsa, and Silva show that a set $S$ is accessible if and only if $G(S)$ has infinite chromatic number, that is, any finite coloring of $G(S)$ contains a pair of adjacent vertices of the same color \cite{GRS16}. This proof extends readily to all acyclic directed graphs, but is an open question for general directed graphs. We state their result here and then extend it to arbitrary directed graphs.
	\begin{theorem}[Guerreiro, Ruzsa, Silva \cite{GRS16}]\label{Accessible Graph}
		Let $G$ be an acyclic directed graph. Then $G$ has infinite chromatic number if and only if $G$ is accessible.
	\end{theorem}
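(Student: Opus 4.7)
The plan is to prove both directions, with the forward one essentially a single observation and the bulk of the work on the converse. For the forward direction, if $G$ is accessible then every finite coloring of $V(G)$ contains a monochromatic walk on at least two vertices, i.e., a monochromatic edge, which already precludes any finite proper coloring and gives infinite chromatic number. So I would devote the proof to the converse, and argue by contrapositive: assuming $G$ is not accessible, I will produce a finite proper coloring of $G$. Concretely, fix a $k$-coloring $\chi : V(G) \to [k]$ and an integer $N$ such that every monochromatic walk in $G$ has at most $N$ vertices.

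The key idea is to refine $\chi$ by appending a monochromatic walk-depth coordinate. For each $v \in V(G)$, define $\ell(v)$ to be the maximum number of vertices over all monochromatic walks of color $\chi(v)$ terminating at $v$. Acyclicity of $G$ forbids any walk from revisiting a vertex (a repeat would produce a directed cycle), so $\ell(v) \in \{1, \dots, N\}$ is a well-defined finite quantity, and $f(v) = (\chi(v), \ell(v)) \in [k] \times [N]$ is a coloring using at most $kN$ colors. To check that $f$ is proper on the underlying undirected graph, consider any directed edge $u \to v$. If $\chi(u) \neq \chi(v)$ then $f(u) \neq f(v)$ trivially. If $\chi(u) = \chi(v)$, then appending $v$ to any longest monochromatic walk of color $\chi(u)$ ending at $u$ produces a monochromatic walk on $\ell(u) + 1$ vertices ending at $v$, so $\ell(v) \geq \ell(u) + 1$ and again $f(u) \neq f(v)$. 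Acyclicity also rules out $2$-cycles, so every adjacency is covered by some directed edge, and $f$ yields the desired contradiction.

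The main delicate point is the essential use of acyclicity: it is what guarantees both that $\ell(v)$ is finite (a monochromatic directed cycle would yield unbounded walks terminating at a common vertex) and that extending a walk by one edge strictly increases its length. I expect no further difficulty in the acyclic case; the obstruction this argument encounters in the presence of cycles is precisely what the subsequent subsection will need to circumvent when passing to general directed graphs.
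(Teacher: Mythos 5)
Your argument is correct. Note that the paper itself does not prove this statement --- it is quoted from \cite{GRS16} and used as a black box in the proof of Theorem \ref{General Directed Graphs} --- so there is no in-paper proof to compare against; your level-function argument (a Gallai--Roy-type refinement $f(v)=(\chi(v),\ell(v))$ of a coloring witnessing non-accessibility) is the standard route and all the key points are in place: $\ell(v)$ is bounded by the assumed bound $N$ on monochromatic walks, and acyclicity is exactly what lets you append $v$ to a longest monochromatic walk ending at $u$ without revisiting a vertex, forcing $\ell(v)\geq\ell(u)+1$ along every monochromatic edge. The only nitpick is the parenthetical claim that a monochromatic directed cycle would yield unbounded walks ending at a fixed vertex: under the paper's convention that a walk is a path (no repeated vertices) this is not literally how finiteness of $\ell$ is obtained, but it is harmless since the bound $N$ already gives $\ell(v)\leq N$ directly.
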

	We prove the following extension of this theorem.
	\begin{theorem}\label{General Directed Graphs}
		Let $G$ be a directed graph with no loops. Then $G$ has infinite chromatic number if and only if $G$ is accessible.
	\end{theorem}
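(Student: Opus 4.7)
The plan is to prove the contrapositive of the nontrivial direction: if $G$ is not accessible, then $\chi(G) < \infty$. The reverse direction is immediate, since any proper $k$-coloring of $G$ forbids monochromatic edges, so every monochromatic directed walk in such a coloring has length zero, witnessing non-accessibility.

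For the main direction, I would assume $G$ admits a finite coloring $c: V(G) \to [k]$ under which every monochromatic directed walk has length at most some constant $M$. For each vertex $v$, I would define $f(v)$ to be the length of the longest directed walk in $G$ that starts at $v$ and uses only vertices of color $c(v)$; by hypothesis $f(v) \leq M$. Setting $\phi(v) = (c(v), f(v)) \in [k] \times \{0, 1, \ldots, M\}$, the central step is to verify that $\phi$ is a proper coloring of $G$. Given a directed edge $(u,v)$ with $\phi(u) = \phi(v)$, monochromaticity of $(u,v)$ lets one prepend $u$ to any maximum-length monochromatic walk $v = w_0, w_1, \ldots, w_{f(v)}$ starting at $v$, yielding a monochromatic walk of length $f(v) + 1$ starting at $u$ and forcing $f(u) \geq f(v) + 1 > f(v)$, a contradiction. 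This gives $\chi(G) \leq k(M+1) < \infty$.

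The only new subtlety relative to the acyclic case of Theorem \ref{Accessible Graph} is ensuring $f(v)$ is finite: in a general directed graph, a monochromatic directed cycle through $v$ would make $f(v) = \infty$, whereas acyclicity ruled this out a priori. I expect this to be the main obstacle worth articulating, but it resolves cleanly: under the non-accessibility assumption, no such monochromatic cycle can exist, since traversing it repeatedly would itself produce monochromatic walks of every length. Thus cycles in $G$ either fail to be monochromatic under $c$, in which case the rank-function argument runs verbatim, or are monochromatic, in which case they witness accessibility directly. Combining these observations extends the acyclic proof to all loopless directed graphs.
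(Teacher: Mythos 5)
Your proof is correct under the reading where ``arbitrarily long monochromatic walks'' allows repeated vertices, but it takes a genuinely different route from the paper. The paper never touches the contrapositive directly: it fixes a linear order on $V$, splits the edges into the ``increasing'' set $E_1$ and ``decreasing'' set $E_2$, observes that both spanning subgraphs are acyclic, shows via a product-coloring argument that at least one of them inherits infinite chromatic number, and then invokes the acyclic case (Theorem \ref{Accessible Graph}) as a black box. Your argument instead reproves everything from scratch with a Gallai--Roy-type rank function $f(v)$ inside each color class, which buys two things the paper's proof does not give: it is self-contained (it never uses the acyclic theorem), and it is quantitative, yielding the explicit bound $\chi(G)\le k(M+1)$ rather than mere finiteness. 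What the paper's decomposition buys in exchange is a stronger conclusion in the accessibility direction: the long monochromatic paths it produces live in an acyclic subgraph and hence never repeat a vertex. This is exactly where your one delicate point sits. Your treatment of the monochromatic-cycle case produces arbitrarily long monochromatic \emph{closed walks}, which contradicts the boundedness assumption only if non-accessibility is taken to bound walks with repetition; if ``path'' is read as a simple path, a monochromatic directed cycle is consistent with all monochromatic simple paths having bounded length, and then your $f(v)$ can be infinite while the prepending step fails (the vertex $u$ may already lie on the maximal path from $v$). The standard repair is to run Gallai--Roy properly: take a maximal acyclic spanning subdigraph of each color class and rank by longest path there, using the fact that any omitted edge closes a directed path in the other direction. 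With that substitution your argument covers both readings; as written, it is complete for the walk interpretation, which is consistent with the paper's own terminology.
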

	\begin{proof}
		Recall that $G$ having infinite chromatic number means that any finite coloring admits monochromatic paths of a single edge, whereas accessible means that any finite coloring admits arbitrarily long monochromatic paths. Then accessibility implies infinite chromatic number.
		
	For the other direction, impose an arbitrary ordering on the vertices $V$ and then partition the edges of $G$ into two sets $E_1$ and $E_2$, where $E_1 = \{(v_1,v_2) \mid v_1 < v_2\}$ and $E_2$ its complement. Then it is clear that the two graphs $(V,E_1)$ and $(V,E_2)$ are acyclic, and both are subgraphs of $G$. Now we show that at least one of these graphs has infinite chromatic number. Assuming otherwise, there would exist some finite colorings $\chi_1$ and $\chi_2$ such that for all $(x,y) \in E_1$, $\chi_1(x) \neq \chi_1(y)$, and similarly for $(x,y) \in E_2$, $\chi_2(x) \neq \chi_2(y)$. Then consider the Cartesian product $\chi_1 \times \chi_2$, which is again a finite coloring. Any pair of adjacent vertices in $G$ must be connected by an edge in either $E_1$ or $E_2$, and therefore differ in at least one coordinate of their color in this coloring. Thus in this finite coloring of $G$, no pair of adjacent vertices share the same color, which contradicts the assumption that $G$ has infinite chromatic number. Then, by contradiction, one of these two acyclic subgraphs of $G$ also has infinite chromatic number. Then any coloring of the vertices of $G$ is a coloring of this subgraph, which by Theorem \ref{Accessible Graph} contains arbitrarily long monochromatic paths.
	\end{proof}
	\section{Further Work}
	This paper answers a number of questions from \cite{GRS16}, but not all of them. One of the very interesting open problems is the question of whether a $2$-ladder is necessarily a ladder. We offer a density variant as well: Say $S \subseteq \Z$ is $\alpha$-Szemer\'edi if any $X \subseteq \Z^+$ with upper density $\alpha$ contains arbitrarily long arithmetic progressions with common difference in $S$. The conjecture states that for any $S$, $\inf\{\alpha: S \text{ is $\alpha$-Szemer\'edi}\} \in \{0,1\}$.
	
	\section{Acknowledgements}
	This research was carried out at the Duluth REU under the supervision of Joe Gallian. Duluth REU is supported by the University of Minnesota Duluth and by grants NSF-1358659 and NSA H98230-16-1-0026. Special thanks to Eric Riedl and Joe Gallian for editing help. Thanks as well to the UMD car rental program, because I was surprisingly productive during the long walks back to my apartment after returning their cars.
	\bibliographystyle{acm}
	\bibliography{Ladder_Bibliography}
\end{document}